\newcommand{\tendsto}{\mathop{\longrightarrow}\limits}
\def\({\left(}
\def\){\right)}
\def\[{\left[}
\def\]{\right]}
\def\<{\left\langle}
\def\>{\right\rangle}
\def\lv{\left\lvert}
\def\rv{\right\rvert}
\newcommand{\floor}[1]{\ensuremath{\left\lfloor#1\right\rfloor}}
\let\geq\geqslant
\let\leq\leqslant
\let\epsilon\varepsilon
\let\phi\varphi
\def\NN{\ensuremath{\mathbb{N}}}
\def\PP{\ensuremath{\mathbb{P}}}
\def\RR{\ensuremath{\mathbb{R}}}
\def\ZZ{\ensuremath{\mathbb{Z}}}
\def\One{\ensuremath{\mathbbm{1}}}
\def\cC{\ensuremath{\mathcal{C}}}
\def\cF{\ensuremath{\mathcal{F}}}
\def\cP{\ensuremath{\mathcal{P}}}
\newenvironment{functiondef}{%
\begin{tabular}{%
    *{2}{>{$\displaystyle}r<{$}}%
    >{$\displaystyle}c<{$}%
    >{$\displaystyle}l<{$}%
}}{%
  \end{tabular}%
}
\newtheoremstyle{plain}
  {\medskipamount}
  {\smallskipamount}
  {\normalfont}
  {0pt}
  {\bfseries}
  {.}
  { }
  {\thmname{#1}\thmnumber{ #2}{\normalfont\thmnote{ (#3)}}}
\theoremstyle{plain}
\newcommand{\comp}{\circ}
\newcommand{\cond}{\mathbin{\vert}}
\begin{document}
\title{A continuous time stochastic model for biological neural nets}

\author[Coregliano]{Leonardo Nagami Coregliano}

\thanks{Instituto de Matem\'atica e Estat\'istica, Universidade de S\~ao Paulo,
  \nolinkurl{lenacore@ime.usp.br}. Supported by Funda\c c\~ao de Amparo \`a Pesquisa
  do Estado de S\~ao Paulo (FAPESP) under grant no.~2013/23720-9.}

\date{\today}

\begin{abstract}
  We propose a new stochastic model for biological neural nets which is a continuous
  time version of the model proposed by~Galves and~L\"{o}cherbach
  in~\cite{MR3055382}. We also show how to computationally simulate such model for
  easy neuron potential decays and probability functions and characterize when the
  model has a finite time of death almost surely.
\end{abstract}

\maketitle

\section{Introduction}
Based on a stochastic model for biological neural nets proposed by Galves
and~L\"{o}cherbach in~\cite{MR3055382} and on a specific algorithm for simulating
such model, we propose a model on continuous time. Formally, the model is actually
still discrete but generates times of neuron discharges which could be interpreted as
a boolean stochastic process on continuous time.

One common approach to define continuous time models is to consider stochastic
differential equations, but the approach considered here is more of an algorithmic
oriented approach and throughout this work we focused on producing a model that can
be efficiently simulated.

Since the nature of the model is algorithmic, we choose to first present the model
informally with a more intuitive and constructive approach and only formalize it on
the final section.

Also in the final section we present a characterization of when the system dies
almost surely, i.e., when there is a time~$t_0\in\RR_+$ such that there are no neuron
discharges after~$t_0$.

\section{Discrete time model}
In this section we present the discrete time model proposed by~Galves
and~L\"{o}cherbach~\cite{MR3055382} and algorithms to simulate it. Such algorithms
will provide a useful insight on how to define an analogous continuous time model.

For simplicity, we will consider in this section the model with a finite set of
neurons and with no potential decay, but we stress that~Galves and~L\"{o}cherbach
did study the case of a countable set of neurons and with potential decays much more
general than the ones we will consider in the continuous time model.

\subsection{The model}
We first assume fixed a finite set~$I$ of neurons, a family of potential probability
functions~$(\phi_i)_{i\in I}$. For every~$i\in I$, the function~$\phi_i\:\RR\to[0,1]$
maps the potential of neuron~$i$ to the probability that it will fire at that
time. For every~$i,j\in I$, we also assume fixed~$W_{i\to j}\in\RR$, which gives the
influence that a discharge of neuron~$i$ has on neuron~$j$.

The model consists of a stochastic chain~$(X_t)_{t\in\ZZ}$ taking values
in~$\{0,1\}^I$ and an auxiliary chain~$(U_t)_{t\in\ZZ}$ taking values in~$\RR^I$. For
each neuron~$i\in I$, and each time~$t\in\ZZ$, the value~$X_t(i)$ will be~$1$ if
neuron~$i$ fires at time~$t$ and~$0$ otherwise. Furthermore, each neuron~$i\in I$
will have an internal potential at time~$t\in\ZZ$ given by~$U_t(i)$.

We define now the filtration
\begin{align*}
  \cF_t & = \sigma[\{X_s : s\in\ZZ, s\leq t\}], \qquad t\in\ZZ,
\end{align*}
and, for every time~$t\in\ZZ$ and neuron~$i\in I$, we let
\begin{align*}
  L_t^i & = \sup\{s \leq t : X_s(i) = 1\}
\end{align*}
be the~$F_t$-measurable time of the last discharge of neuron~$i$ up to time
time~$t$. The potential of neuron~$i$ at time~$t$ is then defined as
the~$F_t$-measurable random variable
\begin{align*}
  U_t(i) & = \max\left\{\sum_{s = L_t^i + 1}^t \sum_{j\in J}W_{j\to i}X_s(j),\; 0\right\}.
\end{align*}

The dynamics of the process is then the following: at time~$t+1$, the probability of
neuron discharges are independent conditionally on the whole past, i.e., we have
\begin{align*}
  \PP(\forall i\in I,X_{t+1}(i) = a_i\cond\cF_t)
  & = \prod_{i\in I}\PP(X_{t+1}(i) = a_i\cond\cF_t),
\end{align*}
furthermore, the probability of having a discharge from neuron~$i\in I$ at time~$t+1$
is given by
\begin{align*}
  \PP(X_{t+1}(i) = 1\cond\cF_t) & = \phi_i(U_t(i)).
\end{align*}

Note that although we define the chain depending on a great part of its past, the
probabilities of discharge at time~$t+1$ can be calculated knowing only the
potentials at time~$t$ and potentials at time~$t+1$ can be calculated knowing only
the potentials at time~$t$ and the discharges at time~$t$.

\subsection{Algorithms}
We now present two different simple algorithms to simulate the discrete model from an
initial state of potentials~$U_0$.

\subsubsection{Single-step algorithm}
The single-step algorithm consists of simulating the evolution of the process
step-by-step. Given potentials~$U_t$ at time~$t$, we sample~$|I|$ independent
uniform random variables~$(V_t(i))_{i\in I}$ over~$[0,1)$ and let
\begin{align*}
  X_{t+1}(i) & = \One_{\{V_t(i) < \phi_i(U_t(i))\}}.
\end{align*}

Furthermore, we update the potentials by letting
\begin{align*}
  U_{t+1}(i) & =
  \begin{dcases*}
    0, & if~$X_{t+1}(i) = 1$;\\
    \max\left\{U_t(i) + \sum_{j\in I}W_{j\to i}X_{t+1}(j),\; 0\right\}, &
    if~$X_{t+1}(i) = 0$;
  \end{dcases*}
  \\
  & = (1-X_{t+1}(i))\max\left\{U_t(i) + \sum_{j\in I}W_{j\to i}X_{t+1}(j),\; 0\right\}
\end{align*}

The simulation consists of repeating this process inductively from~$t=0$.

Is easy to see that this simulation indeed produces the correct stochastic process.

\subsubsection{Multi-step algorithm}
The multi-step algorithm consists of skipping steps of the single-step algorithm
where no neuron discharges occur. To do this, we first observe that in the absecence
of other neuron discharges, the time we have to wait from time~$t_0$ to see a
discharge from neuron~$i$ has geometric distribution of
parameter~$\phi_i(U_{t_0}(i))$.

Given potentials~$U_t$ at time~$t$, we sample~$|I|$ independent random
variables~$(T_t(i))_{i\in I}$ with~$T_t(i)$ having geometric distribution with
parameter~$\phi_i(U_t(i))$ for every~$i\in I$ and let~$T_t = \min\{T_t(i) : i\in I\}$
and~$D_t = \{i\in I : T_t(i) = T_t\}$.

The variable~$T_t$ gives how much time we have to wait to see a neuron firing, so we
skip this amount of time by letting~$U_{t+s}(i) = U_t(i)$ and~$X_{t+s}(i) = 0$ for
every~$0 < s < T_t$ and~$i\in I$ and then let
\begin{align*}
  X_{t+T_t}(i) & = \One_{\{i\in D_t\}}.
\end{align*}

Furthermore, we update the potentials by letting
\begin{align*}
  U_{t+T_t}(i) & = 
  \begin{dcases*}
    0, & if~$X_{t+T_t}(i) = 1$;\\
    \max\left\{U_t(i) + \sum_{j\in I}W_{j\to i}X_{t+T_t}(j),\; 0\right\},
    & if~$X_{t+T_t}(i) = 0$;
  \end{dcases*}
  \\
  & = (1-X_{t+T_t}(i))\max\left\{U_t(i) + \sum_{j\in I}W_{j\to i}X_{t+T_t}(j),\; 0\right\}.
\end{align*}

\begin{proposition}
  The multi-step algorithm correcly simulates the discrete time stochastic process.
\end{proposition}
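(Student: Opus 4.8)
The plan is to show that the multi-step algorithm and the single-step algorithm define the same law on the space of trajectories, which suffices since the single-step algorithm is already known to produce the correct process. The core observation is the one the authors highlight when motivating the algorithm: between consecutive discharges the potentials do not change, so in the single-step algorithm, starting from potentials $U_t$ at time $t$, the discharge indicators $X_{t+1}(i), X_{t+2}(i), \dots$ are, for as long as no neuron has fired, i.i.d.\ Bernoulli$(\phi_i(U_t(i)))$ across time and independent across $i\in I$. Hence the first time $s\geq 1$ at which some neuron fires is the minimum over $i$ of independent geometric random variables with parameters $\phi_i(U_t(i))$, and conditionally on that minimum the set of neurons firing at that time is exactly the set of indices attaining the minimum. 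This is precisely the pair $(T_t, D_t)$ sampled by the multi-step algorithm.

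First I would set up the comparison carefully: fix $t$ and condition on $\cF_t$ (equivalently on $U_t$, by the Markov-type remark at the end of the model section). In the single-step dynamics, define $S = \min\{s\geq 1 : X_{t+s}(j)=1 \text{ for some } j\in I\}$ and $E = \{j\in I : X_{t+S}(j)=1\}$. I would then prove the distributional identity
\begin{align*}
  \PP\bigl(S = s,\ E = A \cond \cF_t\bigr)
  = \PP\bigl(T_t = s,\ D_t = A \cond \cF_t\bigr)
\end{align*}
for every $s\geq 1$ and every nonempty $A\subseteq I$. The left side factors, using conditional independence across neurons and across the intervening time steps, as $\prod_{i\in A}\phi_i(U_t(i))\bigl(1-\phi_i(U_t(i))\bigr)^{s-1}\prod_{i\notin A}\bigl(1-\phi_i(U_t(i))\bigr)^{s}$, and the same expression arises on the right from the joint law of independent geometrics via $T_t=\min_i T_t(i)$, $D_t=\argmin_i T_t(i)$. (A small bookkeeping point: one must handle $\phi_i(U_t(i))=0$, where the geometric is interpreted as $+\infty$; if all parameters are $0$ no neuron ever fires and both algorithms agree trivially on that event.)

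Next I would verify that \emph{given} $S=T_t=s$ and $E=D_t=A$, the two algorithms assign identical values to all the intermediate and current variables: both set $X_{t+s'}(i)=0$ and leave potentials constant for $0<s'<s$, both set $X_{t+s}(i)=\One_{\{i\in A\}}$, and both apply the same deterministic potential-update rule to pass from $U_t$ to $U_{t+s}$. Therefore the conditional law of the trajectory on $\{t+1,\dots,t+s\}$ together with the new potential $U_{t+s}$ is the same under both algorithms. Finally I would close the argument by induction on the successive discharge times: starting from $U_0$, the two algorithms have the same law for (the trajectory up to and including the first discharge, and the potential right after it); by the strong Markov property of the construction and the induction hypothesis applied with the new starting potential, they have the same law for the whole trajectory. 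The main obstacle is essentially notational rather than conceptual: making the conditional-independence factorization of the single-step algorithm rigorous over a random (stopping) time horizon, and checking the $\phi_i=0$ edge case, so that the geometric-minimum computation is fully justified; once that identity is in hand the induction is routine.
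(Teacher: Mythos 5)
Your proposal is correct and follows essentially the same route as the paper: both reduce the claim to showing that the pair (first discharge time, set of discharging neurons) has the same law under the two algorithms, compute the single-step probability as a product over neurons using the fact that potentials are frozen between discharges, and match it with the joint law of the minimum of independent geometrics, then conclude by restarting from the updated potential. The only additions in your write-up are the explicit treatment of the $\phi_i(U_t(i))=0$ edge case and the explicit induction over successive discharge times, both of which the paper leaves implicit.
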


\begin{proof}
  Throughout this proof, we denote the random variables produced by the single-step
  algorithm by~$X'$ and~$U'$.

  For every time~$t$, let
  \begin{align*}
    T_t'(i) & = \inf\{s > t : X_s'(i)=1\};\\
    T_t' & = \min\{T_t'(i) : i\in I\};\\
    D_t' & = \{i\in I : X_{T_t'}(i) = 1\}.
  \end{align*}

  Note that it is enough to prove that the random variables~$(T_0',D_0')$
  and~$(T_0,D_0)$ have the same law (because after times~$T_0$ and~$T_0'$, the
  algorithms depend only on~$U_{T_0}$ and~$U_{T_0'}'$ respectively).

  Observe now that for every~$t_0>0$ and every~$A_0\subset I$ not-empty, we have
  that~$(T_0',D_0') = (t_0,A_0)$ if and only if
  \begin{align*}
    X_t'(i) & = 0, \qquad \forall i\in I, \forall t < t_0;\\
    X_{t_0}'(i) & = 0, \qquad \forall i\in I\setminus A_0;\\
    X_{t_0}'(i) & = 1, \qquad \forall i\in A_0;
  \end{align*}
  hence we have
  \begin{align*}
    \PP((T_0',D_0') = (t_0,A_0))
    & = \prod_{t=1}^{t_0-1}\prod_{i\in I}(1-\phi_i(U_0(i)))
    \prod_{i\in I\setminus A_0}(1-\phi_i(U_0(i)))
    \prod_{i\in A_0}\phi_i(U_0(i))
    \\
    & = \prod_{i\in I\setminus A_0}(1-\phi_i(U_0(i)))^{t_0}
    \prod_{i\in A_0}(1-\phi_i(U_0(i)))^{t_0-1}\phi_i(U_0(i)),
  \end{align*}
  because the potentials of the neurons remain the same if there is no neuron discharge.

  Therefore we have
  \begin{align*}
    \PP((T_0',D_0') = (t_0,A_0))
    & = \prod_{i\in I\setminus A_0}\PP(T_0(i) > t_0)
    \prod_{i\in A_0}\PP(T_0(i) = t_0)
    \\
    & = \PP(\forall i\in I\setminus A_0, T_0(i) > t_0\text{ and }\forall i\in A_0,
    T_0(i) = t_0)
    \\
    & = \PP((T_0,D_0) = (t_0,A_0),
  \end{align*}
  because of the independence of the random variables~$(T_0(i))_{i\in I}$.

  Thus we have proved that~$(T_0',D_0')$ and~$(T_0,D_0)$ have the same law.
\end{proof}

\section{Continuous time model}
\label{sec:continuous}
We introduce now the continuous time stochastic process. The idea is to define the
process from the multi-step algorithm presented in the previous section.

We suppose that an initial potential~$(U_0(i))_{i\in I}$ is given. Naturally, the
difference from this model to the last is that the chains~$(X_t)_{t\in\RR_+}$
and~$(U_t)_{t\in\RR_+}$ are now indexed by continuous time.

Now we would like to add decay on the potentials of the neurons so we suppose that,
for every~$i\in I$, we are given a
function~$V_i\:\RR_+\times\RR_+\to\RR_+$ that will give how the
potential of neuron~$i$ decays in the abscence of neuron discharges: the
value~$V_i(u,s)$ will be how much potential is left after a total time~$s$ has
passed if the neuron started with potential~$u$ (naturally, we suppose~$V_i(u,0) = u$
for every~$u>0$).

The dynamics of the process is then defined by the following evolution algorithm.

Given potentials~$U_t$ at time~$t$, we sample~$|I|$ independent random
variables~$(T_t(i))_{i\in I}$ with~$T_t(i)$ having a certain distribution (which will
depend on the choice of~$V_i$ and which we will study later) for every~$i\in I$ and
let~$T_t = \min\{T_t(i) : i\in I\}$ and~$D_t = \{i\in I : T_t(i) = T_t\}$.

Then we let~$U_{t+s}(i) = V_i(U_t(i),s)$ and~$X_{t+s}(i) = 0$ for
every~$0 < s < T_t$ and~$i\in I$ and let
\begin{align*}
  X_{t+T_t}(i) & = \One_{\{i\in D_t\}}.
\end{align*}

Furthermore, we update the potentials by letting
\begin{align*}
  U_{t+T_t}(i) & = 
  \begin{dcases*}
    0, & if~$X_{t+T_t}(i) = 1$;\\
    \max\left\{V_i(U_t(i),T_t) + \sum_{j\in I}W_{j\to i}X_{t+T_t}(j),\; 0\right\},
    & if~$X_{t+T_t}(i) = 0$;
  \end{dcases*}
  \\
  & = (1-X_{t+T_t}(i))
  \max\left\{V_i(U_t(i),T_t) + \sum_{j\in I}W_{j\to i}X_{t+T_t}(j),\; 0\right\}.
\end{align*}

\subsection{Distribution of wait time}
In this section, we will study what kind of distribution we should put in the random
variables~$T_t(i)$'s. We assume that~$t$ and~$i$ are fixed, and we will
denote~$T_t(i)$ by simply~$T$ until the end of this section
(Section~\ref{sec:continuous}).

In the multi-step algorithm of the discrete time model, this variable had geometric
distribution with parameter~$\lambda = \phi(U)$ (here we also are dropping the
notation by letting~$\phi\equiv\phi_i$ and~$U = U_t(i)$). This means that, for
every~$k\in\NN^*$, we had
\begin{align*}
  \PP(T = k) & = (1-\lambda)^{k-1}\lambda = \PP(T < k)\lambda,
\end{align*}
so the natural analogous distribution in continuous time is to consider a probability
density function~$\rho$ satisfying
\begin{align*}
  \rho(t) & = \(1 - \int_0^t\rho(s)ds\)\lambda.
\end{align*}

Now we have that~$\rho$ must be a solution of the following differential equation
\begin{align*}
  \rho'(t) & = -\lambda\rho(t),
\end{align*}
hence it is of the form
\begin{align*}
  \rho(t) & = Ce^{-\lambda t},
\end{align*}
where~$C\in\RR$ is a constant.

Substituting back in the original equation, we have
\begin{align*}
  \int_0^t\rho(s)ds & = 1 - \frac{\rho(t)}{\lambda} = 1 - \frac{Ce^{-\lambda t}}{\lambda},
\end{align*}
and, using the condition~$\int_0^0\rho(s)ds = 0$, we deduce that~$C = \lambda$,
hence we obtain the exponential distribution of parameter~$\lambda$, which was
expected.

But we are now interested in allowing the parameter~$\lambda$ to vary in time. This
means that~$\rho$ should satisfy
\begin{align*}
  \rho(t) & = \(1 - \int_0^t\rho(s)ds\)\lambda(t),
\end{align*}
which leads to the following differential equation
\begin{align*}
  \frac{\rho'(t)\lambda(t) - \rho(t)\lambda'(t)}{\lambda^2(t)} & = \rho(t),
\end{align*}
whose solutions are of the form
\begin{align*}
  \rho(t) & = C\lambda(t)\exp\(-\int_0^t\lambda(s)ds\),
\end{align*}
where~$C\in\RR$ is a constant.

Substituting back in the original equation, we have
\begin{align*}
  \int_0^t\rho(s)ds & = 1 - \frac{\rho(t)}{\lambda(t)}
  = 1 - C\exp\(-\int_0^t\lambda(s)ds\),
\end{align*}
and again, using the condition~$\int_0^0\rho(s)ds$, we deduce that~$C = 1$.

Note now that~$\int_0^{+\infty}\rho(s)ds = 1$ if and only
if~$\int_0^{+\infty}\lambda(s)ds = +\infty$. The interpretation of this is that
if~$\lambda(t)$ decreases too fast, then there is a non-zero probability that the
neuron will never fire.

Now define the function
\begin{align*}
  \begin{functiondef}
    F\: & [0,+\infty] & \longrightarrow & [0,1]\\
    & t & \longmapsto &
    \begin{dcases*}
      \int_0^t\rho(s)ds, & if~$t<+\infty$;\\
      1, & if~$t=+\infty$;
    \end{dcases*}
  \end{functiondef}
\end{align*}
which is the analogous of a cumulative distribution function with the difference that
the random variable can take the value~$+\infty$.

Therefore, the distribution of~$T$ is given by
\begin{align*}
  \begin{dcases}
    \PP(T < t) = F(t), & \forall t < +\infty;\\
    \PP(T = +\infty) = F(+\infty)-\lim_{s\to+\infty}F(s)
    = 1 - \int_0^{+\infty}\rho(s)ds. &
  \end{dcases}
\end{align*}

Now we let~$\lambda(t) = \phi(V(t))$ (again dropping the notation by letting~$V(s) =
V_i(U(t),s)$).

In the next sections, we will study decay laws that satisfy the following
differential equation
\begin{align*}
  V'(t) & = -\mu V^\gamma(t),
\end{align*}
where~$\gamma\geq 1$ is a fixed constant.

Note that in this particular case we have
\begin{align*}
  \int_0^t\lambda(s)ds & = \int_0^t\phi(V(s))ds 
  = \int_0^t\frac{\phi(V(s))V'(s)}{-\mu V^\gamma(s)}ds
  = \frac{1}{\mu}\int_{V(t)}^{V(0)}\frac{\phi(v)}{v^\gamma}dv.
\end{align*}
and~$\lim_{t\to+\infty}V(t) = 0$.

Furthermore, we are interested in the following potential probability functions
\begin{enumerate}
\item Exponential: $\phi^{(\exp)}(u) = 1 - e^{-\beta u}$, for~$\beta > 0$ fixed;
\item Rational: $\phi^{(r)}(u) = u^r / (u^r + \beta)$, for~$r\in\NN^*$ and~$\beta >
  0$ fixed;
\item Monomial: $\phi_r(u) = \beta u^r$, for~$r\in\NN^*$ and~$\beta > 0$ fixed;
\end{enumerate}

\subsection{Exponential decay}
In the case~$\gamma = 1$, we have the following differential equation
\begin{align*}
  V'(t) & = -\mu V(t),
\end{align*}
whose solutions are of the form
\begin{align*}
  V(t) & = V(0)e^{-\mu t},
\end{align*}
which is the exponential decay typical of radioactive decay models.

For this type of decay, we can prove a very interesting property.

\begin{proposition}\label{prop:pinftyexp}
  If~$g\:\RR_+\to\RR_+$ is a continuous function with~$g(u) = 0$ if and only if~$u =
  0$ and continuously differentiable in a neighbourhood of~$0$ and~$r > 0$ is a
  positive real number, then taking~$V(t) = V(0)e^{-\mu t}$ and~$\phi\equiv g^r$
  yields~$\PP(T = +\infty)>0$.
\end{proposition}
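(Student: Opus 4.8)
The plan is to use the criterion established earlier in this section: $\PP(T=+\infty)>0$ if and only if $\int_0^{+\infty}\lambda(s)\,ds<+\infty$, where $\lambda(s)=\phi(V(s))$. Since $\gamma=1$ here, the change-of-variables formula from the previous subsection gives
\begin{align*}
  \int_0^{+\infty}\lambda(s)\,ds
  & = \frac{1}{\mu}\int_0^{V(0)}\frac{\phi(v)}{v}\,dv
  = \frac{1}{\mu}\int_0^{V(0)}\frac{g(v)^r}{v}\,dv,
\end{align*}
using $\lim_{t\to+\infty}V(t)=0$. So the whole statement reduces to showing that this last integral is finite. (If $V(0)=0$ the neuron is already quiescent and the integral is trivially $0$; so assume $V(0)>0$.)

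First I would record that the only possible source of divergence is the behaviour of the integrand near $v=0$: on any interval $[\delta,V(0)]$ with $\delta>0$ the function $v\mapsto g(v)^r/v$ is continuous (since $g$ is continuous and $g$ vanishes only at $0$, the denominator is bounded away from $0$ there), hence integrable. So it suffices to bound the integral over $(0,\delta]$ for some convenient small $\delta$.

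Next I would exploit the hypothesis that $g$ is continuously differentiable in a neighbourhood of $0$ together with $g(0)=0$. By the mean value theorem (or a first-order Taylor estimate), there is a $\delta>0$ and a constant $M=\sup_{[0,\delta]}|g'|<+\infty$ such that $g(v)\leq M v$ for all $v\in[0,\delta]$. Therefore, on $(0,\delta]$,
\begin{align*}
  \frac{g(v)^r}{v} & \leq \frac{(Mv)^r}{v} = M^r v^{r-1},
\end{align*}
and since $r>0$ we have $\int_0^{\delta} v^{r-1}\,dv = \delta^r/r<+\infty$. Combining this with the continuity estimate on $[\delta,V(0)]$ shows $\int_0^{V(0)} g(v)^r/v\,dv<+\infty$, hence $\int_0^{+\infty}\lambda(s)\,ds<+\infty$, and therefore $\PP(T=+\infty)=1-\int_0^{+\infty}\rho(s)\,ds>0$ by the criterion quoted above.

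The only mildly delicate point is making sure the reduction to the integral $\tfrac1\mu\int_0^{V(0)}\phi(v)/v\,dv$ is legitimate — i.e.\ that the change of variables $v=V(s)$ is valid (it is, since $V$ is a strictly decreasing $C^1$ bijection from $[0,+\infty)$ onto $(0,V(0)]$ with $V'(s)=-\mu V(s)\neq0$) and that $V(s)$ never enters a region where $g$ fails to be differentiable (it does not, since $V$ decreases monotonically toward $0$, so after some finite time it stays in the neighbourhood of $0$ where $g\in C^1$, and the initial finite stretch contributes a finite amount by continuity). Everything else is the routine estimate above.
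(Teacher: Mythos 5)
Your proposal is correct and follows essentially the same route as the paper's own proof: both reduce the claim to the finiteness of $\frac{1}{\mu}\int_0^{V(0)}\phi(v)/v\,dv$, bound $g(v)\leq Mv$ near $0$ using the $C^1$ hypothesis and $g(0)=0$, and integrate $v^{r-1}$ using $r>0$. Your use of $M=\sup_{[0,\delta]}\lv g'\rv$ in place of the paper's $g'(0)+\epsilon$ is an immaterial variation, and your explicit justification of the change of variables is a welcome extra detail.
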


\begin{proof}
  Since~$g$ is continuously differentiable in a neighbourhood of~$0$ and~$g(0) = 0$,
  for every~$\epsilon > 0$, there exists~$t_0>0$ such that
  \begin{align*}
    g(t) & \leq (g'(0)+\epsilon)t,
  \end{align*}
  for every~$0 \leq t < t_0$, hence
  \begin{align*}
    \phi(t) & \leq (g'(0)+\epsilon)^rt^r.
  \end{align*}

  This means that
  \begin{align*}
    \int_0^{t_0}\frac{\phi(v)}{v}dv & \leq \int_0^{t_0}\frac{(g'(0)+\epsilon)^rv^r}{v}dv
    \\
    & = (g'(0)+\epsilon)^r\left.\frac{t^r}{r}\right\vert_0^{t_0}
    = (g'(0)+\epsilon)^r\frac{t_0^r}{r} < +\infty,
  \end{align*}
  where the last two equalities follow from the fact that~$r > 0$.

  Hence
  \begin{align*}
    \int_0^{+\infty}\lambda(s)ds & = \frac{1}{\mu}\int_0^{V(0)}\frac{\phi(v)}{v}dv
    \\
    & = \frac{1}{\mu}\(\int_0^{t_0}\frac{\phi(v)}{v}dv
    + \int_{t_0}^{V(0)}\frac{\phi(v)}{v}dv\)
    \\
    & < +\infty
  \end{align*}
  and, therefore
  \begin{align*}
    \PP(T < +\infty) & = 1 - \exp\(-\int_0^{+\infty}\lambda(s)ds\) < 1.
  \end{align*}
\end{proof}

\begin{corollary}
  Suppose that, for every~$i\in I$, we are given a positive real number~$r_i>0$ and a
  function~$g_i\:\RR_+\to\RR_+$ such that~$g_i(0) = 0$ and such that~$g_i$ is
  continuous and continuously differentiable in a neighbourhood of~$0$.

  Then the continuous time model with~$\phi_i\equiv g_i^r$ for every~$i\in I$ and
  exponential decay has non-null probability of never firing, i.e., we have
  \begin{align*}
    \PP(\forall t > 0, \forall i\in I, X_t(i) = 0) > 0.
  \end{align*}
\end{corollary}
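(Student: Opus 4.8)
The plan is to reduce the statement to Proposition~\ref{prop:pinftyexp} applied to each neuron separately, exploiting that the very first step of the continuous time dynamics already decides whether any neuron ever fires.

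First I would unwind the dynamics started from the given potential~$U_0$. At time~$0$ the algorithm samples the independent random variables~$(T_0(i))_{i\in I}$ and sets~$T_0 = \min\{T_0(i):i\in I\}$ and~$D_0 = \{i : T_0(i) = T_0\}$. If~$T_0 = +\infty$, then by construction~$X_s(i) = 0$ for every~$s > 0$ and every~$i\in I$, so no neuron ever fires; conversely, if~$T_0 < +\infty$ then~$D_0\neq\emptyset$ and some neuron fires at the finite time~$T_0$. Hence
\begin{align*}
  \{\forall t > 0,\ \forall i\in I,\ X_t(i) = 0\}
  & = \{T_0 = +\infty\}
  = \bigcap_{i\in I}\{T_0(i) = +\infty\}.
\end{align*}

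Since the~$(T_0(i))_{i\in I}$ are independent and~$I$ is finite, this gives
\begin{align*}
  \PP(\forall t > 0,\ \forall i\in I,\ X_t(i) = 0)
  & = \prod_{i\in I}\PP(T_0(i) = +\infty),
\end{align*}
so it suffices to show that each factor is strictly positive. Fix~$i\in I$. The waiting time~$T_0(i)$ has exactly the distribution studied in the previous subsection with~$\phi\equiv\phi_i = g_i^{r_i}$ and~$V(t) = V_i(U_0(i),t)$, which under exponential decay is of the form~$V(0)e^{-\mu t}$. As~$g_i$ is continuous, satisfies~$g_i(0) = 0$, is continuously differentiable near~$0$, and~$r_i > 0$, Proposition~\ref{prop:pinftyexp} yields~$\PP(T_0(i) = +\infty) > 0$. (Only~$g_i(0) = 0$ and the continuous differentiability near~$0$ enter the proof of that proposition, so its hypothesis that~$g_i$ vanish \emph{only} at~$0$ is not needed here; and in the degenerate case~$U_0(i) = 0$ one has~$\phi_i\comp V\equiv 0$, whence~$\PP(T_0(i) = +\infty) = 1$ outright.)

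Finally, a product of finitely many strictly positive reals is strictly positive, which gives the claim. I expect the only real content to be the first step: recognizing that the global event ``no neuron ever fires'' is \emph{determined} by the independent first-step variables~$(T_0(i))_{i\in I}$, so that independence together with finiteness of~$I$ reduces everything to the single-neuron estimate of Proposition~\ref{prop:pinftyexp}; the remaining steps are routine bookkeeping.
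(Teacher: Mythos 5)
Your proof is correct, and it is essentially the argument the paper leaves implicit (the corollary is stated without proof as an immediate consequence of Proposition~\ref{prop:pinftyexp}): the event of never firing is exactly $\{T_0=+\infty\}=\bigcap_{i\in I}\{T_0(i)=+\infty\}$, and independence over the finite set~$I$ reduces everything to the single-neuron bound. Your side remarks --- that the ``vanishes only at~$0$'' hypothesis of the proposition is not actually used, and that the degenerate case~$U_0(i)=0$ gives~$\PP(T_0(i)=+\infty)=1$ outright --- are accurate and tie up the only loose ends.
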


\begin{remark*}
  Clearly from the proof of Proposition~\ref{prop:pinftyexp}, we only need Lipschitz
  condition on a neighbourhood of~$0$ and locally boundedness of~$g$, but we chose
  not to state the proposition in the more general form for simplicity.
\end{remark*}

\subsubsection{Pratical sampling}
Although the continuous time algorithm is well-defined, we are interested in actually
implementing it with a reasonable efficiency, so in this section we show how one can
sample the random variable~$T$ of the algorithm from a uniform random variable~$Z$
over~$[0,1)$.

For the case of the rational potential~$\phi^{(r)}$, note that, for every~$t<+\infty$,
we have
\begin{align*}
  \int_0^t\lambda(s)ds & = \frac{1}{\mu}\int_{V(t)}^{V(0)}\frac{\phi^{(r)}(v)}{v}dv
  \\
  & = \frac{1}{\mu}\int_{V(t)}^{V(0)}\frac{v^{r-1}}{v^r+\beta}dv
  \\
  & = \frac{1}{\mu}\left.\frac{\ln(v^r+\beta)}{r}\rv_{V(t)}^{V(0)}
  \\
  & = \frac{1}{r\mu}\ln\frac{V(0)^r+\beta}{V(t)^r+\beta}
  \tendsto^{t\to+\infty}\frac{1}{r\mu}\ln\frac{V(0)^r+\beta}{\beta}.
\end{align*}

Hence, for every~$t<+\infty$, we have
\begin{align*}
  F(t) & = 1 - \exp\(-\int_0^t\lambda(s)ds\)
  = 1 - \(\frac{V(0)^r+\beta}{V(t)^r+\beta}\)^{-1/(r\mu)}
  \\
  & = 1 - \(\frac{V(0)^r+\beta}{V(0)^re^{-\mu rt}+\beta}\)^{-1/(r\mu)}
  \tendsto^{t\to+\infty} 1 - \(\frac{V(0)^r+\beta}{\beta}\)^{-1/(r\mu)}.
\end{align*}

With a simple calculation, for every~$t<+\infty$, we have
\begin{align*}
  t & = -\frac{1}{r\mu}\ln\frac{(V(0)^r+\beta)(1 - F(t))^{r\mu}-\beta}{V(0)^r},
\end{align*}
hence defining
\begin{align*}
  \begin{functiondef}
    G\: & [0,1) & \longrightarrow & [0,+\infty]\\
    & \xi & \longmapsto &
    \begin{dcases}
      -\frac{1}{r\mu}\ln\frac{(V(0)^r+\beta)(1 - \xi)^{r\mu}-\beta}{V(0)^r},
      & \text{ if }\xi < 1 - \(\frac{V(0)^r+\beta}{\beta}\)^{-1/(r\mu)};
      \\
      +\infty, & \text{ otherwise};
    \end{dcases}
  \end{functiondef}
\end{align*}
we have~$\PP(G(Z) < t) = F(t)$ for every~$t<+\infty$
and~$\PP(G(Z)<+\infty)=\lim_{t\to+\infty}F(t)$, thus giving an easy way of
sampling~$T$ from~$Z$.

\begin{remark*}
  Note that taking the limit~$V(0)\to+\infty$, we have that~$G(\xi) = -\ln(1-\xi)$
  for every~$\xi\in[0,1)$, this means that if~$V(0)$ is arbitrarily large, the wait
  time has exponential distribution with parameter~$1$ ($\xi = 1 - e^{-G(\xi)}$).
\end{remark*}

\bigskip

The case of the exponential potential~$\phi^{(\exp)}$ also gives an
invertible~$F(t)$, but the inverse cannot be expressed in terms of essential
functions, which makes it very unpratical to compute.

\bigskip

We will now consider the monomial potential~$\phi_r$. Note that, for
every~$t<+\infty$, we have
\begin{align*}
  \int_0^t\lambda(s)ds & = \frac{1}{\mu}\int_{V(t)}^{V(0)}\frac{\phi_r(v)}{v}dv
  \\
  & = \frac{1}{\mu}\int_{V(t)}^{V(0)}\beta v^{r-1}dv
  \\
  & = \frac{\beta (V(0)^r - V(t)^r)}{r\mu}
  \\
  & \tendsto^{t\to+\infty}\frac{\beta V(0)^r}{r\mu}
\end{align*}

Hence, for every~$t<+\infty$, we have
\begin{align*}
  F(t) & = 1 - \exp\(-\int_0^t\lambda(s)ds\)
  = 1 - \exp\(-\frac{\beta(V(0)^r - V(t)^r)}{r\mu}\)
  \\
  & = 1 - \exp\(-\frac{\beta V(0)^r(1 - e^{-\mu rt})}{r\mu}\)
  \tendsto^{t\to+\infty} 1 - \exp\(-\frac{\beta V(0)^r}{r\mu}\).
\end{align*}

With a simple calculation, for every~$t<+\infty$, we have
\begin{align*}
  t & = -\frac{1}{\mu r}\ln\(1-\frac{r\mu\ln(1-F(t))}{\beta V(0)^r}\),
\end{align*}
hence defining
\begin{align*}
  \begin{functiondef}
    G\: & [0,1) & \longrightarrow & [0,+\infty]\\
    & \xi & \longmapsto &
    \begin{dcases}
      -\frac{1}{\mu r}\ln\(1-\frac{r\mu\ln(1-\xi)}{\beta V(0)^r}\),
      & \text{ if }\xi < 1 - \exp\(-\frac{\beta V(0)^r}{r\mu}\);
      \\
      +\infty, & \text{ otherwise};
    \end{dcases}
  \end{functiondef}
\end{align*}
we have~$\PP(G(Z) < t) = F(t)$ for every~$t<+\infty$
and~$\PP(G(Z)<+\infty)=\lim_{t\to+\infty}F(t)$, thus giving an easy way of
sampling~$T$ from~$Z$.

\begin{remark*}
  Note that taking the limit~$V(0)\to+\infty$, we have that~$G(\xi) = 0$ for
  every~$\xi\in[0,1)$, this means that if~$V(0)$ is arbitrarily large, the neuron
  fires imediately. This behaviour is typical of potential functions such
  that~$\lim_{u\to+\infty}\phi(u)=+\infty$.
\end{remark*}

\subsection{Other decays}
Now we will cover the case where~$V(t)$ satisfies the
following differential equation
\begin{align*}
  V'(t) & = -\mu V(t)^\gamma,
\end{align*}
with~$\gamma > 1$ a fixed constant.

The solutions of this differential equation are of the form
\begin{align*}
  V(t) & = (V(0)^{1-\gamma} + (\gamma-1)\mu t)^{1/(1-\gamma)},
\end{align*}
which in the particular case when~$\gamma=2$ gives the reciprocal decay
\begin{align*}
  V(t) & = \frac{1}{\mu t + V(0)^{-1}}.
\end{align*}

And in this cases we have a proposition analogous to
Proposition~\ref{prop:pinftyexp}.

\begin{proposition}\label{prop:pinftyother}
  Suppose~$g\:\RR_+\to\RR_+$ is a continuous function vanishing only at~$0$ and of
  class~$\cC^{\floor{\gamma}}$ on a neighbourhood of~$0$ and
  \begin{align*}
    \frac{d^kg}{dt^k}(0) & = 0,
  \end{align*}
  for every~$k = 0, 1, \ldots, \floor{\gamma}-1$.

  Then for every~$r > (\gamma-1)/\floor{\gamma}$, taking~$V(t) = (V(0)^{1-\gamma} +
  (\gamma-1)\mu t)^{1/(1-\gamma)}$ and~$\phi\equiv g^r$ yields~$\PP(T = +\infty)>0$.
\end{proposition}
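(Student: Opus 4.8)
The plan is to reduce, just as in the proof of Proposition~\ref{prop:pinftyexp}, the statement to a convergence question for an improper integral. Recall that $\PP(T=+\infty)=\exp\(-\int_0^{+\infty}\lambda(s)\,ds\)$, so it suffices to show $\int_0^{+\infty}\lambda(s)\,ds<+\infty$. Using the identity $\int_0^t\lambda(s)\,ds=\frac{1}{\mu}\int_{V(t)}^{V(0)}\phi(v)v^{-\gamma}\,dv$ established above together with $\lim_{t\to+\infty}V(t)=0$, monotone convergence (the integrand is nonnegative) gives $\int_0^{+\infty}\lambda(s)\,ds=\frac{1}{\mu}\int_0^{V(0)}g(v)^r v^{-\gamma}\,dv$; we may assume $V(0)>0$, the case $V(0)=0$ being trivial since then $\lambda\equiv g(0)^r=0$ and $T=+\infty$ surely. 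So everything comes down to showing $\int_0^{V(0)}g(v)^r v^{-\gamma}\,dv<+\infty$, and since the integrand is continuous and nonnegative on $(0,V(0)]$, the only issue is integrability at the origin.

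To handle the singularity at $0$, I would use the smoothness hypothesis and Taylor's theorem. Put $n=\floor{\gamma}$, which is at least $1$ since $\gamma>1$. By assumption $g$ is of class $\cC^n$ on some interval $[0,t_0]$, which we may take with $t_0\leq V(0)$, and $g^{(k)}(0)=0$ for $k=0,\ldots,n-1$; hence Taylor's formula with Lagrange remainder gives, for each $v\in(0,t_0]$, a point $\xi_v\in(0,v)$ with $g(v)=g^{(n)}(\xi_v)v^n/n!$. Since $g^{(n)}$ is continuous on the compact interval $[0,t_0]$ it is bounded there, say by $M$, so $0\leq g(v)\leq (M/n!)v^n$ and therefore $g(v)^r\leq (M/n!)^r v^{nr}$ on $[0,t_0]$ (here $r>0$, since $r>(\gamma-1)/\floor{\gamma}>0$, so raising to the power $r$ preserves the inequality).

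Splitting $\int_0^{V(0)}=\int_0^{t_0}+\int_{t_0}^{V(0)}$, the second integral is finite because the integrand $g(v)^rv^{-\gamma}$ is continuous on the compact set $[t_0,V(0)]$, and the first satisfies $\int_0^{t_0}g(v)^rv^{-\gamma}\,dv\leq (M/n!)^r\int_0^{t_0}v^{nr-\gamma}\,dv$, which converges exactly when $nr-\gamma>-1$, i.e.\ when $r>(\gamma-1)/\floor{\gamma}$ --- precisely the hypothesis. Hence $\int_0^{+\infty}\lambda(s)\,ds<+\infty$ and $\PP(T=+\infty)=\exp\(-\int_0^{+\infty}\lambda(s)\,ds\)>0$. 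The computations are routine; the one point deserving care is the passage from ``$g\in\cC^{\floor{\gamma}}$ with vanishing derivatives up to order $\floor{\gamma}-1$'' to the polynomial estimate $g(v)=O(v^{\floor{\gamma}})$, and the verification that the resulting exponent condition $\floor{\gamma}r-\gamma>-1$ matches the stated threshold $r>(\gamma-1)/\floor{\gamma}$ --- in particular that $\cC^{\floor{\gamma}}$ (rather than $\cC^{\floor{\gamma}-1}$) is exactly what legitimizes the Lagrange-remainder argument and that no off-by-one error creeps into the exponent. (As in the remark following Proposition~\ref{prop:pinftyexp}, a Lipschitz-type hypothesis on the $(\floor{\gamma}-1)$-st derivative would suffice, but the $\cC^{\floor{\gamma}}$ statement is cleaner.)
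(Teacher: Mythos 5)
Your proof is correct and follows essentially the same route as the paper's: reduce to the convergence of $\int_0^{V(0)}\phi(v)v^{-\gamma}\,dv$ at the origin, use the vanishing of the first $\floor{\gamma}-1$ derivatives to get $g(v)=O(v^{\floor{\gamma}})$ near $0$, and compare with $\int_0^{t_0}v^{\floor{\gamma}r-\gamma}\,dv$, which converges exactly under the hypothesis $r>(\gamma-1)/\floor{\gamma}$. The only cosmetic difference is that you invoke Taylor's theorem with Lagrange remainder and a sup bound on $g^{(\floor{\gamma})}$, whereas the paper uses the equivalent $\epsilon$-perturbation bound $g(t)\leq(g^{(\floor{\gamma})}(0)+\epsilon)t^{\floor{\gamma}}/\floor{\gamma}!$.
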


\begin{proof}
  Note that~$\floor{\gamma}r - \gamma + 1> 0$. Since~$g$ is of
  class~$\cC^{\floor{\gamma}}$ on a neighbourhood of~$0$ and all
  the~$\floor{\gamma}-1$ first derivatives of~$g$ are null on~$0$, we have that for
  every~$\epsilon>0$, there exists~$t_0>0$ such that
  \begin{align*}
    g(t) & \leq (g^{(\floor{\gamma})}(0) + \epsilon)\frac{t^{\floor{\gamma}}}{\floor{\gamma}!},
  \end{align*}
  for every~$0<t<t_0$, where~$g^{(\floor{\gamma})}$ is the~$\floor{\gamma}$-th derivative
  of~$g$. Hence we have
  \begin{align*}
    \phi(t) & \leq
    (g^{(\floor{\gamma})}(0) + \epsilon)^r\frac{t^{\floor{\gamma}r}}{\floor{\gamma}!^r}.
  \end{align*}

  This means that
  \begin{align*}
    \int_0^{t_0}\frac{\phi(v)}{v^\gamma}dv & \leq
    \int_0^{t_0}\frac{(g^{(\floor{\gamma})}(0) + \epsilon)^rt^{\floor{\gamma}r}}%
        {\floor{\gamma}!^rv^\gamma}dv
    \\
    & = \(\frac{g^{(\floor{\gamma})}(0)+\epsilon}{\floor{\gamma}!}\)^r
    \left.\frac{t^{\floor{\gamma}r - \gamma + 1}}{\floor{\gamma}r - \gamma + 1}
    \right\vert_0^{t_0}
    \\
    & = \(\frac{g^{(\floor{\gamma})}(0)+\epsilon}{\floor{\gamma}!}\)^r
    \frac{t_0^{\floor{\gamma}r}}{\floor{\gamma}r},
  \end{align*}
  where the last two equalities follow from the fact
  that~$\floor{\gamma}r-\gamma+1>0$.

  Hence
  \begin{align*}
    \int_0^{+\infty}\lambda(s)ds & = \frac{1}{\mu}\int_0^{V(0)}\frac{\phi(v)}{v^\gamma}dv
    \\
    & = \frac{1}{\mu}\(\int_0^{t_0}\frac{\phi(v)}{v^\gamma}dv
    + \int_{t_0}^{V(0)}\frac{\phi(v)}{v^\gamma}dv\)
    \\
    & < +\infty
  \end{align*}
  and, therefore
  \begin{align*}
    \PP(T < +\infty) & = 1 - \exp\(-\int_0^{+\infty}\lambda(s)ds\) < 1.
  \end{align*}
\end{proof}

\begin{corollary}
  Suppose that, for every~$i\in I$, we are given a real number~$r_i >
  (\gamma-1)/\floor{\gamma}$ function~$g_i\:\RR_+\to\RR_+$ such that~$g_i$ is
  continuous and continuously differentiable $\floor{\gamma}$ times in a
  neighbourhood of~$0$ and the first~$\floor{\gamma}-1$ derivatives of~$g$ and~$g$
  itself are all null on~$0$.

  Then the continuous time model with~$\phi_i\equiv g_i^r$ for every~$i\in I$ and
  decay with parameter~$\gamma$ has non-null probability of never firing, i.e., we
  have
  \begin{align*}
    \PP(\forall t > 0, \forall i\in I, X_t(i) = 0) > 0.
  \end{align*}
\end{corollary}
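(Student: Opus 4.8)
The plan is to peel the statement apart neuron by neuron and reduce it to Proposition~\ref{prop:pinftyother}. First I would pin down the relevant event. By the very construction of the continuous time process, starting from time~$0$ one draws the independent wait times~$(T_0(i))_{i\in I}$ and sets~$T_0 = \min\{T_0(i) : i\in I\}$ and~$D_0 = \{i\in I : T_0(i) = T_0\}$: if~$T_0 = +\infty$ the evolution algorithm never produces a discharge, whereas if~$T_0 < +\infty$ then, the minimum over the finite set~$I$ being attained,~$D_0$ is non-empty and every neuron in~$D_0$ fires at the strictly positive time~$T_0$. Hence
\begin{align*}
  \{\forall t > 0, \forall i\in I, X_t(i) = 0\}
  & = \{T_0 = +\infty\}
  = \bigcap_{i\in I}\{T_0(i) = +\infty\}.
\end{align*}

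Next, using that the random variables~$(T_0(i))_{i\in I}$ are independent, I would factor
\begin{align*}
  \PP(\forall t > 0, \forall i\in I, X_t(i) = 0)
  & = \prod_{i\in I}\PP(T_0(i) = +\infty),
\end{align*}
so that (the product being finite, since~$I$ is finite) it suffices to show that every factor is strictly positive.

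Then, for each fixed~$i\in I$, I would observe that~$T_0(i)$ has exactly the law worked out in Section~\ref{sec:continuous}, determined by the decay law~$V(t) = V_i(U_0(i),t) = (U_0(i)^{1-\gamma} + (\gamma-1)\mu t)^{1/(1-\gamma)}$ and the rate~$\lambda(t) = \phi_i(V(t)) = g_i(V(t))^{r_i}$. The assumptions made on~$g_i$ and~$r_i$ — $g_i$ continuous, vanishing only at~$0$, of class~$\cC^{\floor{\gamma}}$ on a neighbourhood of~$0$ with~$g_i$ and its first~$\floor{\gamma}-1$ derivatives all vanishing at~$0$, and~$r_i > (\gamma-1)/\floor{\gamma}$ — are precisely the hypotheses of Proposition~\ref{prop:pinftyother} applied with~$g = g_i$ and~$r = r_i$; that proposition therefore gives~$\PP(T_0(i) = +\infty) > 0$ directly (and in the degenerate case~$U_0(i) = 0$ one has~$V\equiv 0$, hence~$\lambda\equiv 0$, and the conclusion is immediate anyway). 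Taking the product over~$i\in I$ of these strictly positive numbers completes the proof.

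Once Proposition~\ref{prop:pinftyother} is in hand the argument is essentially bookkeeping; the single point that deserves real care is the first displayed identity, i.e., the identification of the event that no neuron ever fires with~$\{T_0 = +\infty\}$, together with its measurability, which rests on the recursive definition of the process (whose rigorous set-up is deferred to the paper's formal final section). I do not anticipate any genuine difficulty beyond that.
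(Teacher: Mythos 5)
Your proposal is correct and matches the intended argument: the paper states this corollary without proof, treating it as an immediate consequence of Proposition~\ref{prop:pinftyother}, and your factorization of $\{\forall t>0,\forall i\in I, X_t(i)=0\}=\bigcap_{i\in I}\{T_0(i)=+\infty\}$ via the independence of the wait times, followed by an application of the proposition to each factor, is exactly the step being left implicit. Your handling of the degenerate case $U_0(i)=0$ and your remark on the measurability of the event are careful additions, not deviations.
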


\begin{remark*}
  Clearly from the proof of Proposition~\ref{prop:pinftyother}, we only need
  $\floor{\gamma}$-H\"{o}lder condition on a neighbourhood of~$0$ and locally
  boundedness of~$g$, but again we chose not to state the proposition in the more
  general form for simplicity.
\end{remark*}

\subsubsection{Pratical sampling}
Again we show that for some potentials we can sample the random variable~$T$ of the
algorithm from a uniform random variable~$Z$ over~$[0,1)$.

For simplicity, we will only show the cases with~$\gamma = 2$.

For the rational potential~$\phi^{(r)}$ with~$r=1$, note that, for every~$t<+\infty$,
we have
\begin{align*}
  \int_0^t\lambda(s)ds & = \frac{1}{\mu}\int_{V(t)}^{V(0)}\frac{\phi^{(1)}(v)}{v^2}dv
  \\
  & = \frac{1}{\mu}\int_{V(t)}^{V(0)}\frac{1}{v(v+\beta)}dv
  \\
  & = \frac{1}{\mu}
  \int_{V(t)}^{V(0)}\(\frac{\beta^{-1}}{v} - \frac{\beta^{-1}}{v+\beta}\)dv
  \\
  & = \frac{1}{\beta\mu}\left.\ln\frac{v}{v+\beta}\rv_{V(t)}^{V(0)}
  \\
  & = \frac{1}{\beta\mu}\ln\frac{V(0)(V(t)+\beta)}{V(t)(V(0)+\beta)}
  \tendsto^{t\to+\infty} +\infty.
\end{align*}

Hence, for every~$t<+\infty$, we have
\begin{align*}
  F(t) & = 1 - \exp\(-\int_0^t\lambda(s)ds\)
  = 1 - \(\frac{V(0)(V(t)+\beta)}{V(t)(V(0)+\beta)}\)^{-1/(\beta\mu)}
  \\
  & = 1 - \(\frac{V(0)((\mu t + V(0)^{-1})^{-1}+\beta)}%
    {(\mu t + V(0)^{-1})^{-1}(V(0)+\beta)}\)^{-1/(\beta\mu)}
  \tendsto^{t\to+\infty} 1.
\end{align*}

With a simple calculation, for every~$t<+\infty$, we have
\begin{align*}
  t & = \frac{(1-F(t))^{-\beta\mu} - 1}{\mu}\(\frac{1}{V(0)} + \frac{1}{\beta}\),
\end{align*}
hence defining
\begin{align*}
  \begin{functiondef}
    G\: & [0,1) & \longrightarrow & [0,+\infty]\\
    & \xi & \longmapsto & \frac{(1-\xi)^{-\beta\mu} - 1}{\mu}
      \(\frac{1}{V(0)} + \frac{1}{\beta}\)
  \end{functiondef}
\end{align*}
we have~$\PP(G(Z) < t) = F(t)$ for every~$t<+\infty$, thus giving an easy way of
sampling~$T$ from~$Z$.

\begin{remark*}
  Note that in this case~$G$ ommits the value~$+\infty$, which means that the neuron
  will always fire in finite time.

  Note also that taking the limit~$V(0)\to+\infty$, we have
  that~$G(\xi)=((1-\xi)^{-\beta\mu}-1)/(\beta\mu)$ for every~$\xi\in[0,1)$.
\end{remark*}

\bigskip

For the rational potential~$\phi^{(r)}$ with~$r=2$, note that, for every~$t<+\infty$,
we have
\begin{align*}
  \int_0^t\lambda(s)ds & = \frac{1}{\mu}\int_{V(t)}^{V(0)}\frac{\phi^{(2)}(v)}{v^2}dv
  \\
  & = \frac{1}{\mu}\int_{V(t)}^{V(0)}\frac{1}{v^2+\beta}dv
  \\
  & = \frac{1}{\mu}\left.
  \frac{1}{\sqrt{\beta}}\arctan\frac{v}{\sqrt{\beta}}\rv_{V(t)}^{V(0)}
  \\
  & = \frac{1}{\sqrt{\beta}\mu}
  \(\arctan\frac{V(0)}{\sqrt{\beta}} - \arctan\frac{V(t)}{\sqrt{\beta}}\)
  \tendsto^{t\to+\infty} \frac{1}{\sqrt{\beta}\mu}\arctan\frac{V(0)}{\sqrt{\beta}}.
\end{align*}

Hence, for every~$t<+\infty$, we have
\begin{align*}
  F(t) & = 1 - \exp\(-\int_0^t\lambda(s)ds\)
  = 1 - \exp\(-\frac{1}{\sqrt{\beta}\mu}
  \(\arctan\frac{V(0)}{\sqrt{\beta}} - \arctan\frac{V(t)}{\sqrt{\beta}}\)\)
  \\
  & = 1 - \exp\(-\frac{1}{\sqrt{\beta}\mu}
  \(\arctan\frac{V(0)}{\sqrt{\beta}}
  - \arctan\frac{(\mu t + V(0)^{-1})^{-1}}{\sqrt{\beta}}\)\)
  \\
  & \tendsto^{t\to+\infty} 1 - \exp\(-\frac{1}{\sqrt{\beta}\mu}
  \arctan\frac{V(0)}{\sqrt{\beta}}\).
\end{align*}

Again with a simple calculation, for every~$t<+\infty$, we have
\begin{align*}
  t & = \frac{1}{\mu}\(\frac{1}{\sqrt{\beta}}
  \cot\(\arctan\frac{V(0)}{\sqrt{\beta}} + \mu\sqrt{\beta}\ln(1-F(t))\)
   - \frac{1}{V(0)}\),
\end{align*}
hence defining
\begin{align*}
  \begin{functiondef}
    G\: & [0,1) & \longrightarrow & [0,+\infty]\\
    & \xi & \longmapsto &
    \begin{dcases}
      \frac{1}{\mu}\(\frac{1}{\sqrt{\beta}}
      \cot\(\arctan\frac{V(0)}{\sqrt{\beta}} + \mu\sqrt{\beta}\ln(1-\xi)\)
      - \frac{1}{V(0)}\),
      & \text{ if }\xi < L;
      \\
      +\infty, & \text{ otherwise};
    \end{dcases}
  \end{functiondef}
\end{align*}
where
\begin{align*}
  L & = 1 - \exp\(-\frac{1}{\sqrt{\beta}\mu}\arctan\frac{V(0)}{\sqrt{\beta}}\),
\end{align*}
we have~$\PP(G(Z) < t) = F(t)$ for every~$t<+\infty$ and~$\PP(G(Z) < +\infty) =
\lim_{t\to+\infty}F(t)$, thus giving an easy way of sampling~$T$ from~$Z$.

\begin{remark*}
  Note that taking the limit~$V(0)\to+\infty$, we have that
  \begin{align*}
    G(\xi) & =
    \begin{dcases}
      \frac{1}{\sqrt{\beta}\mu}\tan\(-\mu\sqrt{\beta}\ln(1-\xi)\),
      & \text{ if } \xi < 1 - \exp\(-\frac{\pi}{2\sqrt{\beta}\mu}\);
      \\
      +\infty, & \text{ otherwise};
    \end{dcases}
  \end{align*}
  for every~$\xi\in[0,1)$, this means that even if~$V(0)$ is arbitrarily large, there
  is a probability of at least~$\exp(-\pi/(2\sqrt{\beta}\mu))$ that the neuron will
  not fire.
\end{remark*}

\bigskip

Now, for the monomial decay~$\phi_r$ with~$r=1$, for every~$t<+\infty$, we have
\begin{align*}
  \int_0^t\lambda(s)ds & = \frac{1}{\mu}\int_{V(t)}^{V(0)}\frac{\phi_1(v)}{v^2}dv
  \\
  & = \frac{1}{\mu}\int_{V(t)}^{V(0)}\frac{\beta}{v}dv
  \\
  & = \frac{\beta}{\mu}\ln\frac{V(0)}{V(t)}
  \\
  & \tendsto^{t\to+\infty} +\infty.
\end{align*}

Hence, for every~$t<+\infty$, we have
\begin{align*}
  F(t) & = 1 - \exp\(-\int_0^t\lambda(s)ds\)
  = 1 - \(\frac{V(0)}{V(t)}\)^{-\beta/\mu}
  \\
  & = 1 - \(1 + V(0)\mu t\)^{-\beta/\mu}
  \tendsto^{t\to+\infty} 1.
\end{align*}

With a simple calculation, for every~$t<+\infty$, we have
\begin{align*}
  t & = \frac{(1 - F(t))^{-\mu/\beta} - 1}{\mu V(0)},
\end{align*}
hence defining
\begin{align*}
  \begin{functiondef}
    G\: & [0,1) & \longrightarrow & [0,+\infty]\\
    & \xi & \longmapsto & \frac{(1 - \xi)^{-\mu/\beta} - 1}{\mu V(0)},
  \end{functiondef}
\end{align*}
we have~$\PP(G(Z) < t) = F(t)$ for every~$t<+\infty$, thus giving an easy way of
sampling~$T$ from~$Z$.

\begin{remark*}
  Note that taking the limit~$V(0)\to+\infty$, we have that~$G(\xi) = 0$, this means
  that if~$V(0)$ is arbitrarily large, the neuron fires imediately.
\end{remark*}

\bigskip

Last, but not least, we consider the monomial potential~$\phi_r$ with~$r\geq 2$. Note
that, for every~$t<+\infty$, we have
\begin{align*}
  \int_0^t\lambda(s)ds & = \frac{1}{\mu}\int_{V(t)}^{V(0)}\frac{\phi_r(v)}{v^2}dv
  \\
  & = \frac{1}{\mu}\int_{V(t)}^{V(0)}\beta v^{r-2}dv
  \\
  & = \frac{\beta}{(r-1)\mu}(V(0)^{r-1} - V(t)^{r-1})
  \\
  & \tendsto^{t\to+\infty} +\infty,
\end{align*}
which coincides with the case~$\phi_{r-1}$ with exponential decay (with a
different~$\beta$).

\bigskip

We hope that these examples were enough to illustrate that by changing the potential
decay law and the potential probability functions we can obtain very distinct wait
time distributions.

\section{Formalization}

We will now formalize the model.

Suppose~$I$ is a finite set and~$(V_i)_{i\in I}\in\(\RR_+^{\RR_+\times\RR_+}\){}^I$
and~$(\phi_i)_{i\in I}\in\(\RR_+^{\RR_+}\){}^I$ are families of functions such that

\begin{enumerate}[1.]
\item For every~$u\in\RR_+$ and every~$i\in I$, we have~$V_i(u,0)=u$;
\item For every~$t\in\RR_+$ and every~$i\in I$, the function~$V_i({{}\cdot{}},t)$ is
  non-decreasing;
\item For every~$u\in\RR_+$ and every~$i\in I$, the function~$V_i(u,{{}\cdot{}})$ is
  non-increasing;
\item For every~$u\in\RR_+$, every~$i\in I$ and every~$t,t'\in\RR_+$, we have
  \begin{align*}
    V_i(u,t+t') & = V_i(V_i(u,t),t');
  \end{align*}
\item For every~$i\in I$, the function~$\phi_i$ is non-decreasing;
\item For every~$i\in I$, the function~$\phi_i\comp V_i$ is Borel-measurable (on both
  coordinates);
\end{enumerate}

Suppose furthermore that~$W_{i\to j}\in\RR^{I\times I}$ is a matrix of neuron
influences and that~$(Z_{i,n})_{i\in I, n\in\NN}$ is a family of independent
identically distributed uniform random variables over~$[0,1)$.

Suppose finally that~$(a(i))_{i\in I}\in\RR_+^I$ is a vector of initial potentials.

We now define sequences~$(T_n(i))_{n\in\NN, i\in I}$, $(X_n(i))_{n\in\NN, i\in I}$
and~$(U_n(i))_{n\in\NN, i\in I}$ of random variables inductively as follows.

\begin{enumerate}[i.]
\item Let~$U_0(i) = a(i)$ for every~$i\in I$;
\item For every~$n\in\NN$, let
  \begin{align*}
    T_n(i) & = \sup \left\{t \in[0,+\infty) :
    1 - \exp\(-\int_0^t\phi_i(V_i(U_n(i),s))dm(s)\) \leq Z_{i,n}\right\};
  \end{align*}

\item For every~$n\in\NN$, let~$T_n = \min\{T_n(i) : i\in I\}$;

  If~$T_n = +\infty$, let~$U_m(i) = \lim_{t\to+\infty}V_i(U_n(i),t)$,
  $T_m(i)=T_m=+\infty$ and~$X_m(i) = 0$ for every~$i\in I$ and every~$m > n$ and stop
  the induction;
\item For every~$n\in\NN$, let~$X_n(i) = \One_{\{T_n(i) = T_n\}}$;
\item For every~$n\in\NN$, let
  \begin{align*}
    U_{n+1}(i) & = (1-X_n(i))
    \max\left\{\lim_{t\to T_n^-}V_i(U_n(i),t) + \sum_{j\in I}W_{j\to i}X_n(j),\; 0\right\}.
  \end{align*}
\end{enumerate}

Finally, define, for every~$n\in\NN$, define~$T_n' = \sum_{n\in\NN}T_n$.

\begin{observation*}
  The condition
  \begin{align*}
    V_i(u,t+t') & = V_i(V_i(u,t),t')
  \end{align*}
  might seem unnatural at first, but it is an underlying condition of the algorithm
  because it says that interrupting the potential evolution at any time~$t$ and
  restarting the evolution from the value~$V_i(u,t)$ yields the same result of not
  interrupting. Such interruptions are made in the algorithm whenever a neuron fires
  and this condition says that a neuron~$i$ will not change its evolution law only
  because another completely independent neuron~$j$ fired (by completely independent,
  we mean a neuron whose influence~$W_{j\to i}$ is zero).

  Note that the values~$(W_{i\to i})_{i\in I}$ are completely irrelevant (they are
  included in the definition of the model for notational simplicity only), because
  if~$X_n(i) = 1$, we have~$U_{n+1}(i) = 0$.

  Moreover, note that formally we can't define random variables indexed by~$\RR_+$ in
  the general case because we might have~$\lim_{n\to+\infty}T_n'<+\infty$.

  Finally, note that we almost surely never have two neurons firing at the same time
  (i.e., we have~$\forall n\in\NN, \sum_{i\in I}X_n(i) \leq 1$ almost surely).
\end{observation*}

Below, we present a sufficient condition for having~$\lim_{n\to+\infty}T_n'=+\infty$.

\begin{lemma}\label{lem:Tnptoinfinity}
  If, for every~$i\in I$, there exists~$t_i>0$ such that
  \begin{align*}
    \sup_{u\in\RR_+}\int_0^{t_i}\phi_i(V_i(u,s))dm(s) < +\infty,
  \end{align*}
  then~$\lim_{n\to+\infty}T_n'=+\infty$.
\end{lemma}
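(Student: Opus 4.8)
The plan is to show that, almost surely, the increments $T_n$ are at least some fixed $t_*>0$ for infinitely many $n$; since the $T_n$ are nonnegative this forces $\sum_{k\in\NN}T_k=+\infty$, and as $T_n'$ is a nondecreasing partial sum of the $T_k$ with $\lim_n T_n'=\sum_{k\in\NN}T_k$, this is exactly the assertion. For the bookkeeping, let $\cG_n=\sigma\((Z_{i,k})_{i\in I,\ k<n}\)$, so that by the inductive construction $U_n(i)$ is $\cG_n$-measurable while $T_n(i)$, $T_n$ and $X_n(i)$ are $\cG_{n+1}$-measurable. Put $t_*=\min_{i\in I}t_i>0$ and, for each $i\in I$, let $C_i=\sup_{u\in\RR_+}\int_0^{t_i}\phi_i(V_i(u,s))\,dm(s)$, which is finite by hypothesis, and set $p=\exp\(-\sum_{i\in I}C_i\)>0$.

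The key step is a uniform lower bound on $\PP(T_n\geq t_*\mid\cG_n)$. Fix $n$ and $i$ and let $\Lambda(t)=\int_0^t\phi_i(V_i(U_n(i),s))\,dm(s)$; for each $t\in[0,t_*]$ this is a $\cG_n$-measurable random variable, it is nondecreasing in $t$, and $\Lambda(t_*)<+\infty$ because $t_*\leq t_i$. From the defining supremum of $T_n(i)$ and monotonicity of $\Lambda$ one gets the inclusion $\{Z_{i,n}>1-e^{-\Lambda(t_*)}\}\subseteq\{T_n(i)\geq t_*\}$, and since $1-e^{-\Lambda(t_*)}\in[0,1)$ and $Z_{i,n}$ is uniform on $[0,1)$ and independent of $\cG_n$,
\[
  \PP(T_n(i)\geq t_*\mid\cG_n)\ \geq\ e^{-\Lambda(t_*)}\ \geq\ \exp\(-\int_0^{t_i}\phi_i(V_i(U_n(i),s))\,dm(s)\)\ \geq\ e^{-C_i}.
\]
Because $(Z_{i,n})_{i\in I}$ is an independent family, independent of $\cG_n$, the events $\{T_n(i)\geq t_*\}$, $i\in I$, are conditionally independent given $\cG_n$; hence, writing $A_n=\{T_n\geq t_*\}=\bigcap_{i\in I}\{T_n(i)\geq t_*\}$, we obtain $\PP(A_n\mid\cG_n)\geq\prod_{i\in I}e^{-C_i}=p$ for every $n$. (If the induction has already stopped by step $n$ then $T_n=+\infty$ by definition, so $\PP(A_n\mid\cG_n)=1\geq p$ there as well, and the bound is valid on all of $\Omega$.)

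Finally I would invoke the conditional (L\'evy) form of the second Borel--Cantelli lemma: the events $A_n$ are $\cG_{n+1}$-measurable and $\sum_n\PP(A_n\mid\cG_n)\geq\sum_n p=+\infty$, so $A_n$ occurs for infinitely many $n$ almost surely. On that event $\sum_{k\in\NN}T_k\geq t_*\cdot\#\{k\in\NN:\,T_k\geq t_*\}=+\infty$, hence $\lim_{n\to+\infty}T_n'=+\infty$; and if $T_N=+\infty$ for some $N$, then $T_n'=+\infty$ for all large $n$ and the conclusion is immediate. The only point I expect to require care is the bookkeeping around the $\sup$-definition of $T_n(i)$ — namely verifying the inclusion $\{Z_{i,n}>1-e^{-\Lambda(t_*)}\}\subseteq\{T_n(i)\geq t_*\}$, the measurability claims ($U_n(i)$ being $\cG_n$-measurable, $A_n$ being $\cG_{n+1}$-measurable, and the joint measurability needed to make $\Lambda(t_*)$ a genuine $\cG_n$-measurable random variable), and the terminated case of the induction — after which the probabilistic core reduces to a uniform tail estimate fed into conditional Borel--Cantelli.
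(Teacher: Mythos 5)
Your proof is correct and follows essentially the same route as the paper's: a uniform positive lower bound on the probability that every $T_n(i)$ exceeds a fixed threshold, followed by a Borel--Cantelli argument to get that this happens infinitely often, whence $\sum_k T_k=+\infty$. Your version is in fact more careful than the paper's two-line sketch, since you correctly observe that the events are not unconditionally independent across $n$ (the potentials $U_n$ depend on the past) and therefore invoke the conditional (L\'evy) form of the second Borel--Cantelli lemma, which is the right way to justify the paper's appeal to ``independence.''
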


\begin{proof}
  Note that the condition of the lemma implies that there is a constant~$\epsilon > 0$
  such that~$\PP(\forall i\in I, T_n(i) > t_i) > \epsilon$ for
  every~$n\in\NN$, hence, from the independence, it follows that this
  event occurs infinitely often in~$n\in\NN$ almost surely.

  Therefore we have that~$T_n > \epsilon$ infinitely often almost surely,
  hence~$\lim_{n\to+\infty}T_n'=+\infty$.
\end{proof}

If we have~$\lim_{n\to+\infty}T_n'=+\infty$ almost surely, then we can define further
the following random variables.

Let~$n_0 = \inf\{n\in\NN : T_n' < +\infty\}$ and~$T_{-1}=0$. Furthermore
let~$X_{T_n'}'(i) = X_n(i)$, for every~$i\in I$ and every~$n\in\NN$ with~$n<n_0$;
and~$X_t(i) = 0$ for every~$t\in\RR_+\setminus\{T_n' : n\in\NN\}$ and~$i\in I$.

Finally, let~$U_t'(i) = V_i(U_n(i),t-T_{n-1}')$ for every~$t\in\RR_+$ such
that~$T_{n-1}'\leq t<T_n'$ with~$n\in\NN$ and~$n\leq n_0$.

With these definitions (and under these hypothesis), we have stochastic
processes~$(X_t')_{t\in\RR_+}$ and~$(U_t')_{t\in\RR_+}$ on continuous time.

We now present a sufficient condition for the finitude of~$n_0$, which represents the
existence of a last neuron discharge (i.e., the event~$\sup\{t\in\RR_+ : \exists i\in
I,X_t'(i)=1\} < +\infty$).

\begin{lemma}\label{lem:n0finite}
  If~$C>0$ is such that, for every~$u\in\RR_+$ and every~$i\in I$, we have
  \begin{align*}
    \int_0^{+\infty}\phi_i(V_i(u,s))dm(s) & < C,
  \end{align*}
  then~$n_0 < +\infty$ almost surely.
\end{lemma}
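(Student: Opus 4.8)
The plan is to show that under the stated hypothesis, at each step $n$ there is a uniformly positive probability that \emph{every} neuron fails to fire in finite time, so that with probability one this happens for some $n$, forcing $n_0<+\infty$. Concretely, recall from the ``Distribution of wait time'' analysis that, conditionally on $U_n$, the variable $T_n(i)$ satisfies
\begin{align*}
  \PP(T_n(i)=+\infty\cond U_n) & = \exp\(-\int_0^{+\infty}\phi_i(V_i(U_n(i),s))dm(s)\) \geq e^{-C},
\end{align*}
using the hypothesis that the integral is bounded by $C$ uniformly in $u$. Since the $Z_{i,n}$ are independent across $i\in I$ (and $T_n(i)$ is a deterministic function of $Z_{i,n}$ and $U_n(i)$), conditionally on $U_n$ the events $\{T_n(i)=+\infty\}$ are independent over $i$, hence
\begin{align*}
  \PP(\forall i\in I, T_n(i)=+\infty\cond U_n) & = \prod_{i\in I}\PP(T_n(i)=+\infty\cond U_n) \geq e^{-C|I|} =: \epsilon > 0.
\end{align*}
Taking expectations, $\PP(\forall i\in I, T_n(i)=+\infty)\geq\epsilon$ for every $n$.

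Next I would upgrade this to an almost-sure statement. Let $A_n$ be the event $\{\forall i\in I, T_n(i)=+\infty\}$, equivalently $\{T_n=+\infty\}$. On $A_n$ the induction in the formalization stops at step $n$, and since no neuron fires from step $n$ onward we get $T_n'=+\infty$ and $n_0\leq n<+\infty$. It therefore suffices to show $\PP(\bigcup_n A_n)=1$. The bound $\PP(A_n\cond \cF_{n})\geq\epsilon$ (where $\cF_n$ is the $\sigma$-algebra generated by $(Z_{i,m})_{i\in I, m<n}$, which determines $U_n$) is a conditional second Borel--Cantelli type estimate: by L\'evy's extension of Borel--Cantelli, $\sum_n\PP(A_n\cond\cF_n)=+\infty$ almost surely (it is literally $+\infty$ since each term is $\geq\epsilon$), hence $A_n$ occurs infinitely often almost surely, and in particular $\bigcup_n A_n$ has probability one. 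Alternatively, and more elementarily, one can argue directly: $\PP(\bigcap_{n=0}^{N}A_n^c)=\EE\[\One_{\bigcap_{n<N}A_n^c}\PP(A_N^c\cond\cF_N)\]\leq(1-\epsilon)\PP(\bigcap_{n<N}A_n^c)$, so by induction $\PP(\bigcap_{n\leq N}A_n^c)\leq(1-\epsilon)^{N+1}\to 0$, giving $\PP(\bigcup_n A_n)=1$.

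The only subtlety worth spelling out is the measurability/independence bookkeeping: one must check that $T_n(i)$ depends on the randomness only through $Z_{i,n}$ and the (previously determined) potential $U_n(i)$, so that conditionally on the past the family $(T_n(i))_{i\in I}$ is genuinely independent with the claimed marginal tail. This follows directly from the inductive definition in the formalization together with hypothesis~6 (Borel-measurability of $\phi_i\comp V_i$), which guarantees the integral in the definition of $T_n(i)$ is a well-defined measurable function of $U_n(i)$. I expect the main (very mild) obstacle to be merely phrasing this conditioning cleanly; the probabilistic content is the one-line estimate $\PP(T_n(i)=+\infty\cond U_n)\geq e^{-C}$ combined with conditional independence across $i$ and the Borel--Cantelli argument above. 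Once $\bigcup_n A_n$ has probability one, $n_0<+\infty$ almost surely, which is the claim.
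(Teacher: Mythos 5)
Your proposal is correct and follows essentially the same route as the paper: a uniform lower bound $e^{-C|I|}$ on the conditional probability that every neuron's wait time is infinite at each step, iterated to give $\PP(\exists n, T_n=+\infty)=1$. You are in fact more careful than the paper's one-line computation, which suppresses the conditioning on the past and writes the per-step bound as $e^{-C}$ rather than $e^{-C|I|}$; your explicit tower-property induction $\PP(\bigcap_{n\leq N}A_n^c)\leq(1-\epsilon)^{N+1}$ is the right way to make that display rigorous.
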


\begin{proof}
  It follows directly from the fact that
  \begin{align*}
    \PP(\exists n\in\NN, T_n = +\infty)
    & = \PP(\exists n\in\NN,\forall i\in I, T_n(i) = +\infty)
    \\
    & \geq 1 - \lim_{n\to\infty}(1-e^{-C})^n = 1.
  \end{align*}
\end{proof}

\begin{remark*}
  Note that the condition of Lemma~\ref{lem:n0finite} implies the condition of
  Lemma~\ref{lem:Tnptoinfinity}.
\end{remark*}

We now come round to the theorem that characterizes the system's death in the case of
non-negative influences.

\begin{theorem}
  Suppose~$W_{i\to j}\geq 0$ for every~$i,j\in I$, let~$D$ be the digraph over~$I$
  such that~$A(D) = \{ij : i\neq j, W_{i\to j}>0\}$ and let~$S\subset I$ be the set of
  neurons~$i\in I$ such that for every~$u\in\RR_+$, we have
  \begin{align*}
    \int_0^{+\infty}\phi_i(V_i(u,t))dm(t) & < +\infty,
  \end{align*}
  and let~$R = I\setminus S$.

  Suppose also that for every~$i\in I$, we have~$a(i) > 0$ (i.e., the initial
  potentials are positive) and~$\phi_i(u) = 0$ if and only if~$u = 0$.

  Suppose finally that for every~$i\in I$, we have~$\lim_{t\to+\infty}V_i(u,t) = 0$
  if and only if~$u = 0$ or~$\lim_{t\to+\infty}V_i(u',t) = 0$ for every~$u'>u$.

  Under these circumstances, we have that
  \begin{align*}
    \PP(n_0 < +\infty) > 0
  \end{align*}
  if and only if~$D[R]$ is a~DAG (i.e., the digraph induced by the neurons
  in~$R$ is a directed acyclic graph).

  Furthermore, if~$\PP(n_0 < +\infty) > 0$, then~$\PP(n_0 < +\infty) = 1$.
\end{theorem}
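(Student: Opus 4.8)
The plan is to split the biconditional into its two directions, using the structural hypotheses to reduce everything to a statement about the digraph $D[R]$. First I would unpack the role of $R$ and $S$. For a neuron $i\in S$, Lemma~\ref{lem:n0finite} applied locally (with $C$ the finite supremum, which exists by the last hypothesis linking $\lim_t V_i(u,t)=0$ monotonically in $u$ — so that $\sup_u \int_0^\infty \phi_i(V_i(u,s))\,dm(s)$ is attained in the limit $u\to\infty$ and is finite) shows that, \emph{whatever its incoming potential}, neuron $i$ has a uniformly positive chance of never firing again after any given discharge. For a neuron $i\in R$, by contrast, $\int_0^\infty\phi_i(V_i(u,t))\,dm(t)=+\infty$ for every $u>0$, so by the formula $\PP(T_n(i)=+\infty)=\exp(-\int_0^\infty\phi_i(V_i(U_n(i),s))\,dm(s))$ such a neuron fires again almost surely whenever its potential is strictly positive; and its potential is strictly positive exactly when it receives a positive kick from a firing in-neighbour (here I use $\phi_i(u)=0\iff u=0$, $a(i)>0$, $W\geq 0$, so $U_n(i)>0$ iff $i$ fired via a predecessor in $D$, and $V_i(u,t)$ stays positive for all finite $t$ when $u>0$ — again from the monotone-in-$u$ hypothesis, since $a(i)>0$ forces $\lim_t V_i(a(i),t)$ need not vanish but $V_i(a(i),t)>0$ for all finite $t$ regardless).

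For the ``if'' direction, assume $D[R]$ is a DAG. I would argue that with positive probability all firings die out. Fix a topological order on $R$. Condition on the event that, from some time on, no neuron in $R$ receives a kick from outside $R$ that is ever ``refreshed''; more precisely, run the induction and observe: each time a neuron in $S$ fires it has probability bounded below by a constant (uniform over all reachable potentials, by the remark after Lemma~\ref{lem:n0finite}) of never firing again, so almost surely each $S$-neuron fires only finitely often. After the last $S$-firing, no new excitation enters $R$ from $S$; thereafter the $R$-neurons can only be excited by other $R$-neurons, and since $D[R]$ is acyclic, a source of $D[R]$ (one with no $R$-predecessors) receives no more kicks, hence fires only finitely often (probability of never firing from a fixed positive potential is $\exp(-\int_0^\infty\phi\circ V)=0$ — wait, that is $0$, so I must be more careful: a source of $D[R]$ has potential that decays and is never refreshed, but $\int_0^\infty\phi_i(V_i(u,s))\,dm(s)=\infty$ means it \emph{will} fire again a.s. unless its potential is exactly $0$). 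So the correct argument is: after the last time that neuron's potential is positive — i.e., after it fires and resets to $0$ and receives no further kick — it is silent forever. Propagating along the topological order of $D[R]$: once all $R$-predecessors of $i$ are permanently silent, $i$'s potential, after its next firing, resets to $0$ and is never kicked again, so $i$ fires only finitely often. By finite induction on the topological order, all of $R$ is eventually silent, hence $n_0<+\infty$; the event has positive probability because at each of finitely many stages we conditioned on an event (the relevant neuron fires rather than never firing, or conversely stays silent) of positive probability, and only finitely many such conditionings are needed before the cascade is forced.

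For the ``only if'' direction, assume $D[R]$ contains a directed cycle $i_0\to i_1\to\cdots\to i_{k-1}\to i_0$. I claim $n_0=+\infty$ almost surely, which gives $\PP(n_0<+\infty)=0$, contradicting $>0$. The idea is that once one neuron on the cycle fires, it forever sustains the others: whenever $i_j$ fires, it adds $W_{i_j\to i_{j+1}}>0$ to $U(i_{j+1})$, making it positive, so $i_{j+1}$ fires again a.s. (as $i_{j+1}\in R$); hence a.s. $i_{j+1}$ fires, which kicks $i_{j+2}$, and so on around the cycle and indefinitely — so after the first firing of any cycle neuron (which happens a.s. since $a(i)>0$ forces $U_0(i)>0$, hence $T_0(i)<\infty$ a.s. for $i\in R$) there are infinitely many firings. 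Making this rigorous requires a Borel–Cantelli / induction argument showing the cascade never terminates: define the event that $i_{j}$ fires at least once after step $n$; show each such event, given the past in which $i_{j-1}$ has fired after step $n$, has conditional probability $1$; chain these. The main obstacle is exactly this ``only if'' part: one must carefully handle the possibility that cycle neurons also receive (possibly cancelling — but no, $W\geq 0$) or delaying influences from $S$, and ensure the timing works out, i.e., that the sequence $T_n'$ still diverges (given by Lemma~\ref{lem:Tnptoinfinity}, whose hypothesis I should check is implied here, or else argue directly that infinitely many firings occur at times going to infinity). The final sentence, that $\PP(n_0<+\infty)>0\implies\PP(n_0<+\infty)=1$, then follows from a $0$–$1$ type argument: conditionally on any finite history, the event $\{n_0<+\infty\}$ has the same positive lower bound (by the Markov-like structure: the future depends only on the current potentials, and the ``if'' construction gives a uniform positive probability of death from any configuration in the DAG case), so by Lévy's $0$–$1$ law the probability is $0$ or $1$, and being positive it is $1$.
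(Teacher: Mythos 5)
Your ``only if'' direction (a directed cycle in $D[R]$ forces $\PP(n_0<+\infty)=0$) is essentially the paper's argument: one maintains the invariant that at every step some neuron on the cycle has strictly positive potential, and since a neuron $i\in R$ with positive potential satisfies $\PP(T_n(i)<+\infty\cond U_n(i)>0)=1$, the event $\{n_0<+\infty\}$ would force $T_{n_0}(i)=+\infty$ for such a neuron, a contradiction. (The preliminary fact you invoke, that $i\in R$ implies $\int_0^{+\infty}\phi_i(V_i(u,t))\,dm(t)=+\infty$ for \emph{every} $u>0$, does require the monotone-limit hypothesis together with the semigroup property of $V_i$; the paper spends the first part of its proof on exactly this, and your sketch should too.)

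The ``if'' direction has a genuine gap. You claim each $S$-neuron fires only finitely often because ``each time it fires it has probability bounded below by a constant, uniform over all reachable potentials, of never firing again,'' asserting that $\sup_{u}\int_0^{+\infty}\phi_i(V_i(u,s))\,dm(s)$ is finite by the monotone-limit hypothesis. That is false: membership in $S$ only says each individual integral is finite, and the supremum over $u$ is typically $+\infty$ (e.g.\ $V_i(u,t)=ue^{-t}$ and $\phi_i(u)=1-e^{-u}$ give an integral of order $\ln u$). Since the potentials $U_n(i)$ are not a priori bounded along the evolution, you get no uniform lower bound on the probability of permanent silence, and the conditional Borel--Cantelli step collapses; the same missing uniformity undermines your L\'evy $0$--$1$ argument for the ``furthermore'' clause. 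The paper's proof is organized precisely to avoid this: its first step shows, by induction on $j$ (using that a firing resets a potential to $0$ and that each discharge adds at most $W=\sum_{i,j}W_{i\to j}$ in total), that configurations in which \emph{all} potentials are bounded by $NW$ recur infinitely often on $\{n_0=+\infty\}$; only on such bounded configurations does one obtain a uniform positive probability that every $S$-neuron falls permanently silent. Its second and third steps then show that runs of $M=2^{\lvert R\rvert}+1$ consecutive $R$-only discharges would have to occur infinitely often, yet are impossible because each $R$-discharge strictly decreases the set $\{i\in R: U_n(i)=0\}$ in the lexicographic order induced by a topological order of $D[R]$ --- a pigeonhole that replaces your ``wait until $S$ is permanently silent, then cascade down the topological order'' plan. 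You would need to import something like that first step (or otherwise control the growth of the potentials) before your argument can be completed.
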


\begin{proof}
  Let's first prove that, for every~$i\in R$, we have in fact
  \begin{align*}
    \int_0^{+\infty}\phi_i(V_i(u,t))dm(t) & = +\infty,
  \end{align*}
  for every~$u > 0$ (not only for some~$u$).

  We know that for such~$i\in R$, there exists~$u_0\in\RR_+$ such that
  \begin{align*}
    \int_0^{+\infty}\phi_i(V_i(u_0,t))dm(t) & = +\infty,
  \end{align*}
  which immediately gives the same for every~$u > u_0$ since~$V_i$ is non-decreasing
  on the first coordinate and~$\phi_i$ is non-decreasing.

  Suppose now that~$u\in\RR_+$ is such that~$0 < u < u_0$ and
  \begin{align*}
    \int_0^{+\infty}\phi_i(V_i(u,t))dm(t) & < +\infty.
  \end{align*}

  Since~$\phi_i$ only vanishes at~$0$, the finitude of this integral
  implies~$\lim_{t\to+\infty}V_i(u,t) = 0$ and since~$u > 0$, we have
  that~$\lim_{t\to+\infty}V_i(u_0,t) = 0$.

  This in particular means that there exists~$t_0>0$ such that~$V_i(u_0,t_0) \leq u$,
  hence, for every~$t\in\RR_+$, we have
  \begin{align*}
    V_i(u,t) \geq V_i(V_i(u_0,t_0),t) = V_i(u_0,t_0+t),
  \end{align*}
  where the inequality follows from the fact that~$V_i$ is non-decreasing in the
  first coordinate.

  Therefore we have
  \begin{align*}
    \int_0^{+\infty}\phi_i(V_i(u,t))dm(t) & \geq \int_0^{+\infty}\phi_i(V_i(u_0,t_0+t))dm(t)
    \\
    & = \int_{t_0}^{+\infty}\phi_i(V_i(u_0,t))dm(t)
    \\
    & = \int_{0}^{+\infty}\phi_i(V_i(u_0,t))dm(t) - \int_0^{t_0}\phi_i(V_i(u_0,t))dm(t)
    \\
    & = +\infty,
  \end{align*}
  where the last equality follows from the fact that the second integral on the
  left hand side is finite (because~$\phi_i$ is non-decreasing, hence locally
  bounded). But this contradicts the choice of~$u$.

  Therefore, for every~$i\in R$ and every~$u > 0$, we have
  \begin{align*}
    \int_0^{+\infty}\phi_i(V_i(u,t))dm(t) & = +\infty.
  \end{align*}

  Note finally that this implies that, for every~$i\in R$ and every~$t,u > 0$, we
  have~$V_i(u,t) > 0$ (because~$V_i$ is non-increasing on the second coordinate
  and~$\phi_i$ only vanishes at~$0$).

  \bigskip

  Suppose now that~$\PP(n_0 < +\infty) > 0$, and let's prove that~$D[R]$ is a~DAG.

  Suppose not, i.e., suppose that there are neurons~$i_1,i_2,\ldots,i_k,i_{k+1}\in R$
  with~$i_1=i_{k+1}$ and such that~$i_ji_{j+1}\in E(D)$ for every~$j\in[k]$.

  First, let's prove by induction that for every~$n\in\NN$ there exists~$j\in[k]$ such
  that~$U_n(i_j)> 0$ almost surely.

  For~$n=0$, this follows immediately from the fact that~$a(i) > 0$ for every~$i\in
  I$.

  Suppose then that~$n > 0$ and that~$U_{n-1}(i_j)>0$.

  If~$X_n(i_j)=0$, then we are done, because~$U_n(i_j)\geq V_{i_j}(U_{n-1}(i_j),T_n)
  > 0$ since all neuron influences are non-negative.
  
  Suppose then that~$X_n(i_j)=1$, then we have~$X_n(i_{j+1})=0$ almost surely,
  hence~$U_n(i_{j+1}) \geq W_{i_j\to i_{j+1}} > 0$.

  Therefore we have
  \begin{align*}
    \PP(\forall n\in\NN, \exists j\in[k], U_n(i_j) > 0) = 1.
  \end{align*}

  \medskip  

  Now, let~$E$ denote the event~$\{n_0 < +\infty\}$ and let~$A$ denote the
  event~$\{n_0 < +\infty; \exists i\in R, U_{n_0}(i) > 0\}$. Note that,
  since~$i_1,i_2,\ldots,i_k\in R$, we know that~$\PP(A\cond E) = 1$.

  On the other hand, for every~$i\in R$ and every~$n\in\NN$, we have
  \begin{align*}
    \PP(T_n(i) < +\infty \cond U_n(i))
    & = 1 - \exp\(-\int_0^{+\infty}\phi_i(V_i(U_n(i),t))dm(t)\)
  \end{align*}
  hence~$\PP(T_n(i) < +\infty \cond U_n(i) > 0) = 1$, because, for every~$u > 0$,
  we have
  \begin{align*}
    \int_0^{+\infty}\phi_i(V_i(u,t))dm(t) = +\infty.
  \end{align*}

  Finally, we have
  \begin{align*}
    0 & = \PP(\exists i\in R, T_{n_0}(i) < +\infty \cond E)
    \\
    & \geq \PP(\exists i\in R, T_{n_0}(i) < +\infty \cond A\cap E)\PP(A \cond E)
    \\
    & = \PP(A \cond E),
  \end{align*}
  which is a contradiction.

  Therefore~$D[R]$ is a~DAG.

  \bigskip

  Suppose now that~$D[R]$ is a~DAG and let's prove that~$\PP(n_0 < +\infty) = 1$
  (note that we will already prove the final part of the theorem also).

  Suppose not, i.e., suppose the event~$n_0 = +\infty$ happens with positive
  probability. From now on, all calculations and statements will be conditioned on
  the event~$\{n_0 = +\infty\}$ and on the event that two neurons never fire at the
  same time (and this will be ommited from the notation).

  The idea is to prove first that there is a state of low potentials that is visited
  infinitely often. The second step is to prove that we see infinitely often a large
  sequence of discharges only from neurons of~$R$ after reaching a state of low
  potential. Finally, the third step is to prove that there cannot be such a large
  sequence of discharges only from neurons of~$R$, which will be a contradiction.

  Before we start, let~$N = |I|$ and~$W = \sum_{i,j\in I}W_{i\to j}$.

  \medskip

  \emph{First step.}

  For every~$j\in[N]$ and every~$n\in\NN$ with~$n\geq j-1$,
  let~$A_{j,n} = \{i\in I : U_{n+1}(i) \leq jW\}$ and~$E_{j,n}$ denote the event
  \begin{align*}
    \{\lv A_{j,n}\rv \geq j\}.
  \end{align*}

  Let's prove by induction on~$j$ that~$E_{j,n}$ happens infinitely often in~$n$
  almost surely.

  For~$j = 1$, note that, since~$n_0 = +\infty$, for every~$\widetilde{n}\in\NN$, there
  exists~$n\geq\widetilde{n}$ and~$i\in I$ such that~$X_n(i) = 1$, hence~$U_{n+1}(i) =
  0$. Therefore~$E_{1,n}$ happens infinitely often in~$n$ almost surely.

  Suppose now that~$j\in[N]\setminus\{1\}$ and that~$E_{j-1,n}$ happens infinitely
  often in~$n$ almost surely.

  Suppose that~$E_{j,n}$ does not happen infinitely often in~$n$, then
  we must have that~$E_{j-1,n}\setminus E_{j,n}$ happens infinitely often in~$n$. 

  On the other hand, note that if there exists~$i_0\in I\setminus A_{j-1,n}$ such
  that~$T_n(i_0) < 1$ and~$T_n(i) > 1$ for every~$i\in A_{j-1,n}$, then we
  have~$U_{n+1}(i) \leq (j-1)W + W = jW$ for every~$i\in A_{j-1,n}\cup\{i_0\}$. This
  means that, for every~$n\in\NN$, we have
  \begin{align*}
    & \hphantom{{}\geq{}}\PP(E_{j,n+1}\cond E_{j-1,n}\setminus E_{j,n})
    \\
    & \geq \PP(\exists i_0\in I\setminus A_{j-1,n}, T_n(i_0) < 1;
    \forall i\in A_{j-1,n}, T_n(i) > 1 \cond E_{j-1,n}\setminus E_{j,n})
    \\
    & \geq \(1 - 
    \!\!\!\!\!\max_{i\in I\setminus A_{j-1,n}}\!\!\!\!\!\!
    \exp\(-\int_0^1\phi_i(V_i((j-1)W,t))dm(t)\)\)
    \!\!\!\prod_{i\in A_{j-1,n}}\!\!\!\!\!
    \exp\(-
    \!\!\int_0^1\!\!\!
    \phi_i(V_i((j-1)W,t))dm(t)\)
    \\
    & \geq \(1 - \max_{i\in I}
    \exp\(-\int_0^1\phi_i(V_i((j-1)W,t))dm(t)\)\)
    \prod_{i\in I}\exp\(-\int_0^1\phi_i(V_i((j-1)W,t))dm(t)\).
  \end{align*}

  Note that the last number is in~$(0,1)$ and is independent of~$n$. Let~$C$ be this
  number. Since~$E_{j-1,n}\setminus E_{j,n}$ happens infinitely often in~$n$, we have
  \begin{align*}
    \PP(E_{j,n} \text{ infinitely often in }n) & \geq
    1 - \lim_{n\to\infty}(1-C)^n = 1,
  \end{align*}
  which is a contradiction.

  Therefore, for every~$j\in[N]$, we have that~$E_{j,n}$ happens infinitely often in~$n$
  almost surely.

  \medskip

  \emph{Second step.}

  Let~$M = 2^{|R|}+1$ and for every~$n\in\NN$, let~$B_n = \{i\in I : X_n(i) = 1\}$
  (remember that we are conditioning on the event~$\forall n\in\NN, |B_n| = 1$).

  Moreover, for every~$j\in[M]$ and every~$n\in\NN$ with~$n\geq N+j-1$,
  let~$F_{j,n}$ denote the event
  \begin{align*}
    E_{N,n-j+1}\cap\{\forall m \in [j-1], B_{n-m+1}\subset R\},
  \end{align*}
  and note that~$F_{j+1,n+1} = F_{j,n}\cap\{B_{n+1}\subset R\}$.

  Let's prove by induction on~$j$ that~$F_{j,n}$ happens infinitely often in~$n$
  almost surely.

  For~$j=1$, we have~$F_{j,n}=E_{N,n}$ and we already know that~$E_{N,n}$ happens
  infinitely often in~$n$ almost surely.

  Suppose now that~$j\in[M]\setminus\{1\}$ and that~$F_{j-1,n}$ happens infinitely
  often in~$n$ almost surely.

  Note first that, from the definition of~$F_{j-1,n}$, we have
  \begin{align*}
    \PP(\forall i\in I, U_{n+1}(i)\leq (N+j)W \cond F_{j-1,n}) & = 1.
  \end{align*}

  Note that, for every~$n\in\NN$, we have
  \begin{align*}
    \PP(\forall n\in S, T_n(i) = +\infty \cond F_{j-1,n})) & \geq
    \prod_{i\in S}\(1 - \exp\(-\int_0^{+\infty}\phi_i(V_i((N+j)W,t))dm(t)\)\).
  \end{align*}

  Note also that right hand side does not depend on~$n$ and is a number in~$(0,1)$
  (from the definition of~$S$). Let~$K$ be this number. Since~$F_{j-1,n}$ happens
  infinitely often in~$n$, we have
  \begin{align*}
    \PP(F_{j-1,n}\cap\{\forall n\in S, T_n(i) = +\infty\})
    \text{ infinitely often in }n) & \geq
    1 - \lim_{n\to\infty}(1-K)^n = 1.
  \end{align*}

  Note now that
  \begin{align*}
    & \hphantom{{}={}}
    \PP(T_{n+1} < +\infty\cond F_{j-1,n}\cap\{\forall n\in S, T_n(i) = +\infty\})
    \\
    & = \PP(\exists i\in R, T_{n+1}(i) < +\infty
    \cond F_{j-1,n}\cap\{\forall n\in S, T_n(i) = +\infty\})
    \\
    & = \PP(F_{j,n}\cond F_{j-1,n}\cap\{\forall n\in S, T_n(i) = +\infty\}),
  \end{align*}
  and since~$n_0=+\infty$, we have that~$F_{j,n}$ happens infinitely often in~$n$
  almost surely.

  Therefore, for every~$j\in[M]$, we have that~$F_{j,n}$ happens infinitely often in~$n$
  almost surely.

  \medskip

  \emph{Third step.}

  Now, let~$i_1,i_2,\ldots,i_k$ be a topological ordering of the vertices of~$D[R]$, i.e.,
  be such that~$i_ji_l\in E(D)$ implies~$j<l$ (such an ordering always exists in a~DAG
  and can be obtained, for instance, by repeatedly removing one vertex that has
  indegree~$0$).

  Now let~$\prec$ be the strict lexicographic order induced by this order on the power
  set~$\cP(R)$ of~$R$ and, for every~$n\in\NN$, let~$Q_n = \{i\in R : U_n(i) = 0\}$.

  Note that, for every~$n\in\NN$, we have
  \begin{align*}
    \PP(Q_{n+1} \prec Q_n \cond B_n\subset R) & = 1,
  \end{align*}
  because if~$B_n = \{i\} \subset R$, then~$i\in Q_{n+1}\setminus Q_n$ (because~$i$
  must have a positive potential to fire) and the discharge of~$i$ only affects
  potentials of neurons after~$i$ in the topological ordering of~$D[R]$.

  On the other hand, since~$|\cP(R)| = 2^R = M-1$, we know that there cannot be~$M$
  consecutive occurrences of~$B_n\subset R$, because each occurrence take~$Q_n$ to a
  strictly smaller~$Q_{n+1}$. But this is precisely the definition of~$F_{M,n}$,
  which we proved to happen infinitely often in~$n$, so we have a contradiction and
  the proof is complete.
\end{proof}

\begin{observation*}
  Note that the condition~$a(i) > 0$ for every~$i\in I$ is only important for the
  first part of the proof (which is expected, since a zero initial potential should
  work in the direction of yielding~$n_0<+\infty$).

  On the other hand, the condition that for every~$i\in I$, we
  have~$\lim_{t\to+\infty}V_i(u,t) = 0$ if and only if~$u = 0$
  or~$\lim_{t\to+\infty}V_i(u',t) = 0$ for every~$u'>u$ might seem artificial at
  first, but it prevents potential decays that present distint regimens: one that
  decays to zero and others that don't.

  One example of symptomatic potential decay is
  \begin{align*}
    V_i(u,t) & = (u - \floor{u})e^{-t} + \floor{u},
  \end{align*}
  which goes to zero as~$t$ goes to~$+\infty$ if~$u < 1$, but presents a different
  behaviour for~$u \geq 1$.
\end{observation*}

\bibliographystyle{amsplain}
\bibliography{refs}
\end{document}